\newcommand{\supp}{\operatorname{supp}}
   \theoremstyle{plain}
   \newtheorem{thm}{Theorem}
   \newtheorem{lem}[thm]{Lemma}
   \theoremstyle{definition}
   \theoremstyle{remark}
\author{V. Manuilov}
\date{}
\address{Moscow Center for Fundamental and Applied Mathematics, Moscow State University,
Leninskie Gory 1, Moscow, 
119991, Russia}
\email{manuilov@mech.math.msu.su}
\thanks{Supported by the RSF grant 23-21-00068.}
\title{An example of a continuous field of Roe algebras}
\begin{document}

\begin{abstract}
The Roe algebra $C^*(X)$ is a non-commutative $C^*$-algebra reflecting metric properties of a space $X$, and it is interesting to understand relation between the Roe algebra of $X$ and the (uniform) Roe algebra of its discretization. Here we do a minor step in this direction in the simplest non-trivial example $X=\mathbb R$ by constructing a continuous field of $C^*$-algebras over $[0,1]$ with the fibers over non-zero points the uniform $C^*$-algebra of the integers, and the fiber over 0 a $C^*$-algebra related to $\mathbb R$.

\end{abstract}

\maketitle

\section{Introduction}

Roe algebras play an increasingly important role in the index theory of elliptic operators on noncompact manifolds and their generalizations \cite{Roe,Higson-Roe,Ludewig, Meyer}. Following the ideology of noncommutative geometry \cite{Connes}, they provide an interplay between metric spaces (e.g. manifolds) and (noncommutative) $C^*$-algebras. 

Let $X$ be a proper metric measure space, that is, $X$ is a set, which is equipped with a metric $d$ and a measure $m$ defined on the Borel $\sigma$-algebra defined by the topology on $X$ induced by the metric, and all balls are compact.  
For a Hilbert space $H$ we write $\mathbb B(H)$ (resp., $\mathbb K(H)$) for the algebra of all bounded (resp., all compact) operators on $H$.

Recall the definition of the Roe algebra of $X$ \cite{Roe}.
Let $H_X$ be a Hilbert space with an action of the algebra $C_0(X)$ of continuous functions on $X$ vanishing at infinity (i.e. a $*$-homomorphism $\pi:C_0(X)\to\mathbb B(H_X)$). We will assume that 
\begin{equation}\label{0}
\{\pi(f)\xi:f\in C_0(X),\xi\in H_X\} \quad\mbox{is\ dense\ in}\quad H_X 
\end{equation}
and that 
\begin{equation}\label{1}
\pi(f)\in\mathbb K(H_X)\quad \mbox{implies\ that}\quad f=0. 
\end{equation}
An operator $T\in\mathbb B(H_X)$ is {\it locally compact} if the operators $T\pi(f)$ and $\pi(f)T$ are compact for any $f\in C_0(X)$. It has {\it finite propagation} if there exists some $R>0$ such that $\pi(f)T\pi(g)=0$ whenever the distance between the supports of $f,g\in C_0(X)$ is greater than $R$. The {\it Roe algebra} $C^*(X,H_X)$ is the norm completion of the $*$-algebra of locally compact, finite propagation operators on $H_X$. As it does not depend on the choice of $H_X$ satisfying (\ref{0}) and (\ref{1}), it is usually denoted by $C^*(X)$. If $X=\mathbb R$ with the standard metric and the standard measure (our main example) then we may (and will, for simplicity) take $H_X=L^2(X)$. 

When $X$ is discrete, the choice $H_X=l^2(X)$ does not satisfy the condition (\ref{1}). In order fix this, one may take $H_X=l^2(X)\otimes H$ for an infinitedimensional Hilbert space $H$. But there is also another option: still to use $H_X=l^2(X)$. The resulting $C^*$-algebra is called the {\it uniform Roe algebra} of $X$, and is denoted by $C^*_u(X)$. This $C^*$-algebra is more tractable, but has less relations with elliptic theory. 

Manifolds and some other spaces $X$ are often endowed with discrete subspaces $D\subset X$ that are $\varepsilon$-dense for some $\varepsilon$, e.g. $\mathbb Z\subset\mathbb R$, or, more generally, lattices in Lie groups, or, even more generally, Delone sets in metric spaces \cite{Bellisard}. Some problems related to $X$ may become simpler when reduced to $D$ (discretization). In particular, it would be interesting to understand relation between a Roe-type algebra of $X$ and the uniform Roe algebra of its discretization $D$. As the first step, we consider here one of the simplest non-trivial cases, $X=\mathbb R$, $D=\mathbb Z$, and construct a continuous field of $C^*$-algebras over the segment $[0,1]$ such that the fiber over $0$ is a certain $C^*$-algebra related to $\mathbb R$, while the fiber over any other point is the uniform Roe algebra of $\mathbb Z$. Such non locally trivial continuous fields of $C^*$-algebras are interesting because they provide relations between fibers over different points. In particular, they provide a map from the $K$-theory group of the fiber over $0$ to the $K$-theory group of the fiber over non-zero points. A similar continuous field with the fiber over $0$ the algebra of functions on a sphere and the fibers over non-zero points the algebra of compact operators was used in \cite{Nest} to give a proof of Bott periodicity in $K$-theory.

\section{Two maps}

Let $D_t=t\mathbb Z\subset\mathbb R$. In this section we construct the maps $\alpha_t:C^*_u(D_t)\to C^*(\mathbb R)$ and $\beta_t: C^*(\mathbb R)\to C^*_u(D_t)$, $t\in(0,1]$.

Let $\varphi_0(x)=\left\lbrace\begin{array}{cl}1+x,& x\in[-1,0];\\1-x,& x\in[0,1];\\0,& \mbox{otherwise},\end{array}\right.$ $\varphi_n(x)=\varphi_0(x-n)$, $\varphi_n^t(x)=\frac{1}{\sqrt{t}}\varphi_n(x/t)$. Then $\supp\varphi_n^t=[t(n-1),t(n+1)]$,  and $\|\varphi^t_n\|_{L^2}=\sqrt{2/3}$ for any $n\in\mathbb Z$ and any $t\in(0,1]$ (here $\|\cdot\|_{L^2}$ denotes the norm in $L^2(\mathbb R)$). In particular, $\varphi_n^t$ and $\varphi_m^t$ are orthogonal when $|m-n|\geq 2$. Let $p_t$ denote the projection, in $L^2(\mathbb R)$, onto the closure $H_t$ of the linear span of $\varphi_n^t$, $n\in\mathbb Z$. 

Let $(G_{nm})_{n,m\in\mathbb Z}$ be the Gram matrix for $\{\varphi_n^t\}$, $n\in\mathbb Z$, $G_{nm}=\langle\varphi_n,\varphi_m\rangle$, (note that $G$ does not depend on $t$) and let $G\in\mathbb B(l^2(\mathbb Z))$ be the operator with the Gram matrix with respect to the standard basis of $l^2(\mathbb Z)$. 

\begin{lem}
The operator $G$ is bounded, invertible and has finite propagation.

\end{lem}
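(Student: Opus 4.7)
My plan is to compute the Gram matrix $G$ explicitly and recognize it as a convolution operator on $\mathbb{Z}$, so that boundedness and invertibility become a one-line Fourier-analytic computation, while finite propagation is visible from the support condition on $\varphi_n$.

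First I would observe that the Gram matrix is translation invariant: since $\varphi_n = \varphi_0(\cdot - n)$ and Lebesgue measure is translation invariant, $G_{nm} = \langle\varphi_0(\cdot-n),\varphi_0(\cdot-m)\rangle$ depends only on $n-m$. Next, because $\supp\varphi_0 = [-1,1]$, the inner product $\langle\varphi_n,\varphi_m\rangle$ vanishes as soon as $|n-m|\geq 2$, and this immediately gives that $G$ has propagation at most $1$ as an operator on $l^2(\mathbb{Z})$ with the standard metric on $\mathbb{Z}$. A direct elementary integration yields the three nonzero entries $G_{nn} = 2/3$ and $G_{n,n\pm1} = 1/6$.

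Then I would conjugate $G$ by the Fourier transform $l^2(\mathbb{Z})\cong L^2(\mathbb{T})$. Under this unitary equivalence $G$ becomes multiplication by its symbol
\eq{\widehat G(\theta) = \tfrac23 + \tfrac16\bigl(e^{i\theta}+e^{-i\theta}\bigr) = \tfrac23 + \tfrac13\cos\theta.}
Boundedness is then just $\|G\| = \|\widehat G\|_\infty = 1$. Invertibility follows from the positive lower bound $\widehat G(\theta) \geq 1/3$ for all $\theta$, which yields $\|G^{-1}\| \leq 3$. This disposes of all three claims in the lemma.

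I do not expect a serious obstacle here: the only subtlety is to justify that translation invariance turns the Gram matrix into a genuine convolution/Laurent operator, and this is standard. If one prefers to avoid Fourier analysis, one can instead note that $G = \tfrac23 I + \tfrac16 (S+S^*)$ where $S$ is the bilateral shift on $l^2(\mathbb{Z})$, so that $\|G\|\leq 1$ is trivial and strict positivity $G\geq \tfrac13 I$ follows from $\|S+S^*\|\leq 2$ together with the functional calculus for the self-adjoint operator $S+S^*$, whose spectrum is $[-2,2]$; either route gives the same conclusion.
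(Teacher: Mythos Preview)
Your proposal is correct and essentially coincides with the paper's proof: both compute the entries $G_{nn}=\tfrac{2}{3}$, $G_{n,n\pm1}=\tfrac{1}{6}$, $G_{nm}=0$ for $|n-m|\geq 2$, and deduce boundedness and invertibility from this tridiagonal structure. Your alternative route via $G=\tfrac{2}{3}I+\tfrac{1}{6}(S+S^{*})$ is exactly the paper's argument (the paper phrases it as $\|\tfrac{2}{3}-G\|=\tfrac{1}{3}$), while the Fourier-symbol picture you lead with is a cosmetic repackaging of the same computation---one that the paper itself invokes later when analyzing $C=G^{-1/2}$.
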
 
\begin{proof}
Direct calculation shows that $G_{n,n}=\frac{2}{3}$, $G_{n,n\pm 1}=\frac{1}{6}$, and $G_{n,m}=0$ when $|m-n|\geq 2$. Therefore, $\|G\|\leq \frac{2}{3}+\frac{1}{3}$ and $\|\frac{2}{3}-G\|=\frac{1}{3}<1$, hence $G$ is invertible.
\end{proof}
 
Set $C=G^{-1/2}$. By functional calculus, $C$ can be approximated by polynomials in $G$, hence $C$ lies in the norm closure of operators of finite propagation, i.e. $C\in C^*_u(\mathbb Z)$.

Let $A\in\mathbb B(l^2(\mathbb Z))$, and let $A_{nm}$ be its matrix elements with respect to the standard basis. Define $\gamma_t(A)\in\mathbb B(H_t)$ by $\gamma_t(A)\varphi_m^t=\sum_{n,m\in\mathbb Z}A_{nm}\varphi_n^t$. Note that $\gamma_t$ is a homomorphism, but not a $*$-homomorphism.

\begin{lem}
There exist $k_1,k_2>0$ such that $k_1\|A\|<\|\gamma_t(A)\|<k_2\|A\|$.

\end{lem}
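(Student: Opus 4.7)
The plan is to realize $\gamma_t$ as conjugation by a change-of-basis operator and then read the bounds off the boundedness and invertibility of $G$ established in the previous lemma.

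First, I would define a linear map $U_t\colon l^2(\mathbb Z)\to H_t$ on the standard basis by $U_t e_n = \varphi_n^t$ and extend by linearity. For $\xi=\sum c_n e_n$ with finitely many non-zero coefficients, a direct computation gives
\eQ{
\|U_t\xi\|_{L^2}^2=\sum_{n,m}\overline{c_n}c_m\langle\varphi_n^t,\varphi_m^t\rangle=\langle G\xi,\xi\rangle,
}
using that $\langle\varphi_n^t,\varphi_m^t\rangle=\langle\varphi_n,\varphi_m\rangle=G_{nm}$ after the change of variable $y=x/t$; in particular the Gram matrix is genuinely $t$-independent. Since, by the previous lemma, $G$ is bounded, invertible, and positive, there exist $c_1,c_2>0$ (independent of $t$) such that $c_1\|\xi\|\leq\|U_t\xi\|\leq c_2\|\xi\|$. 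Thus $U_t$ extends to a bounded operator that is bounded below, so its range is closed; since $H_t$ is by definition the closed linear span of $\{\varphi_n^t\}$, the range is exactly $H_t$ and $U_t\colon l^2(\mathbb Z)\to H_t$ is invertible with $\|U_t\|\leq c_2$ and $\|U_t^{-1}\|\leq 1/c_1$.

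Next, I would unwind the definition of $\gamma_t$ to show that it is precisely conjugation by $U_t$. Indeed, for each basis vector $e_m$,
\eQ{
\gamma_t(A)U_t e_m=\gamma_t(A)\varphi_m^t=\sum_n A_{nm}\varphi_n^t=\sum_n A_{nm}U_t e_n=U_t(Ae_m),
}
so $\gamma_t(A)=U_t A U_t^{-1}$ on all of $H_t$.

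The bounds then follow immediately: $\|\gamma_t(A)\|\leq\|U_t\|\,\|A\|\,\|U_t^{-1}\|\leq (c_2/c_1)\|A\|$, and symmetrically $\|A\|=\|U_t^{-1}\gamma_t(A)U_t\|\leq(c_2/c_1)\|\gamma_t(A)\|$, so one may take $k_2=c_2/c_1$ and $k_1=c_1/c_2$. Because $G$, hence $c_1,c_2$, do not depend on $t$, these constants are uniform in $t\in(0,1]$. There is no serious obstacle in the argument; the only point requiring care is the verification that the Gram matrix is in fact $t$-independent, which makes the estimate uniform in $t$ and will be needed later when assembling the continuous field.
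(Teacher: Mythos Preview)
Your proof is correct. Both arguments rest on the same fact---that the Gram matrix $G$ is bounded and bounded below---but you package it more cleanly: you introduce the change-of-basis operator $U_t$ with $\|U_t\xi\|^2=\langle G\xi,\xi\rangle$, observe $\gamma_t(A)=U_tAU_t^{-1}$, and read off the two-sided bound from the invertibility of $U_t$. The paper instead expands $\|\gamma_t(A)x\|^2$ directly, obtaining $\frac{2}{3}\|A\tilde x\|^2+\frac{1}{6}\langle A\tilde x,(S+S^*)A\tilde x\rangle$ (this is exactly $\langle GA\tilde x,A\tilde x\rangle$) and bounding the shift term by hand. Your version is shorter and makes the uniformity in $t$ transparent; the paper's version has the minor virtue of giving explicit constants, though it never states them.
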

\begin{proof}
Let $S$ denote the right shift on $l^2(\mathbb Z)$, $x=\sum_{i\in\mathbb Z}x_i\varphi_i^t$. Then 
\begin{eqnarray*}
\|\gamma_t(A)x\|^2&=&\sum_{i,j,k,l\in\mathbb Z}\bar x_i x_j\bar A_{ki}A_{lj}\langle\varphi_k^t,\varphi_l^t\rangle\\
&=&\sum_{i,j,k\in\mathbb Z}\bar x_i x_j\bar A_{ki}A_{kj}+\frac{1}{6}\sum_{i,j,k,l\in\mathbb Z}\bar x_i x_j\bar A_{ki}A_{k\pm 1,j}\\
&=&\frac{2}{3}\|A\tilde x\|+\frac{1}{6}\langle A\tilde x,(S+S^*)A\tilde x\rangle,
\end{eqnarray*}
where $\tilde x\in l^2(\mathbb Z)$ has coordinates $x_i$ with respect to the standard basis of $l^2(\mathbb Z)$.
As $|\frac{1}{6}\langle A\tilde x,(S+S^*A\tilde x\rangle|\leq \frac{1}{3}\|A\|\|\tilde x\|$, the conclusion follows.
\end{proof}

Set $\psi_n^t=\gamma_t(C)\varphi_n^t=\sum_{m\in\mathbb Z}C_{mn}\varphi_m^t$. Then $\langle\psi_n^t,\psi_m^t\rangle=\langle \gamma_t(G^{-1})\varphi_n,\varphi_m\rangle=\delta_{n,m}$, hence $\{\psi_n^t\}_{n\in\mathbb Z}$ is an orthonormal system. Invertibility of $C$ implies that the closures of the linear spans of $\{\varphi_n^t\}_{n\in\mathbb Z}$ and of $\{\psi_n^t\}_{n\in\mathbb Z}$ coincide. The advantage of this orthonormal system with respect to the system obtained from $\{\varphi_n^t\}_{n\in\mathbb Z}$ by Gram--Schmidt orthogonalization is that it is obtained from the original non-orthogonal system by an operator from $C^*_u(\mathbb Z)$.

Define a map $\alpha_t:C^*_u(\mathbb Z)\to C^*(\mathbb R)$. Let $T\in C^*_u(\mathbb Z)$, $T=(T_{nm})_{n,m\in\mathbb Z}$. Set 
$$
\alpha_t(T)(f)=\sum_{n,m\in\mathbb Z}T_{nm}\psi_n^t\langle \psi_m^t,f\rangle, \quad f\in L^2(\mathbb R).
$$ 
Let $U_t:l^2(\mathbb Z)\to L^2(\mathbb R)$ be the isometry defined by $U_t(\delta_n)=\psi_n^t$. Then it is easy to see that $\alpha_t(T)=U_tTU_t^*$. Hence $\alpha_t$ is a $*$-homomorphism, in particular, it is isometric.
As $T$ is bounded, $\alpha_t(T)$ is bounded as well. 

As $\gamma_t(C)$ can be considered as the transition matrix from the basis $\{\varphi_n^t\}$ to the basis $\{\psi_n^t\}$, we can write $\alpha_t(T)=\gamma_t(C)^{-1}\gamma_t(T)\gamma_t(C)$.


It remains to check that $\alpha_t(T)\in C^*(\mathbb R)\subset\mathbb B(L^2(\mathbb R))$. To this end, consider one more basis for $H_t$. By construction of $C$, for any $\varepsilon>0$ there exists an operator $C_\varepsilon\in\mathbb B(l^2(\mathbb Z))$ of finite propagation $M_\varepsilon$ such that $\|C-C_\varepsilon\|<\varepsilon$.  Set $\psi_n^{t,\varepsilon}=\gamma_t(C_\varepsilon)\varphi_n^t$. Set $\tilde T_\varepsilon=\gamma_t(C_\varepsilon)^{-1}\gamma(T)\gamma(C_\varepsilon)$. 

\begin{lem}
For sufficiently small $\varepsilon$ there exists $K>0$ such that $\|\alpha_t(T)-\tilde T_\varepsilon\|<K\varepsilon$ for any $t\in(0,1]$.

\end{lem}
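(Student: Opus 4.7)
The plan is to rewrite the difference $\alpha_t(T)-\tilde T_\varepsilon$ as a telescoping sum and then estimate each piece using the fact that $\gamma_t$ is a homomorphism with operator norm bounds $k_1,k_2$ that are uniform in $t$.

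First, since $\gamma_t$ is a homomorphism and the identity matrix maps to the identity on $H_t$, invertibility of $C$ gives $\gamma_t(C)^{-1}=\gamma_t(C^{-1})$, and similarly once we know $C_\varepsilon$ is invertible. Because $\|C-C_\varepsilon\|<\varepsilon$ and $C$ is invertible with $\|C^{-1}\|=\|G^{1/2}\|$ finite, a Neumann series argument shows that for all sufficiently small $\varepsilon$ the operator $C_\varepsilon$ is invertible with
\[
\|C_\varepsilon^{-1}\|\le\frac{\|C^{-1}\|}{1-\varepsilon\,\|C^{-1}\|}.
\]
Combined with the upper bound from the previous lemma, this gives
\[
\|\gamma_t(C_\varepsilon)^{-1}\|=\|\gamma_t(C_\varepsilon^{-1})\|\le k_2\|C_\varepsilon^{-1}\|,
\]
bounded uniformly in $t$ and in $\varepsilon$ (for $\varepsilon$ small). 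Similarly $\|\gamma_t(C)^{-1}\|\le k_2\|C^{-1}\|$.

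Next I would write the telescoping identity
\[
\alpha_t(T)-\tilde T_\varepsilon=\gamma_t(C)^{-1}\gamma_t(T)\bigl(\gamma_t(C)-\gamma_t(C_\varepsilon)\bigr)+\bigl(\gamma_t(C)^{-1}-\gamma_t(C_\varepsilon)^{-1}\bigr)\gamma_t(T)\gamma_t(C_\varepsilon),
\]
and use that $\gamma_t$ is linear, so $\gamma_t(C)-\gamma_t(C_\varepsilon)=\gamma_t(C-C_\varepsilon)$, whose norm is bounded by $k_2\|C-C_\varepsilon\|<k_2\varepsilon$. For the second term, the usual resolvent trick gives
\[
\gamma_t(C)^{-1}-\gamma_t(C_\varepsilon)^{-1}=\gamma_t(C)^{-1}\gamma_t(C_\varepsilon-C)\gamma_t(C_\varepsilon)^{-1},
\]
whose norm is at most $k_2\|C^{-1}\|\cdot k_2\varepsilon\cdot k_2\|C_\varepsilon^{-1}\|$.

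Finally I would assemble the bounds: each factor other than $\gamma_t(C-C_\varepsilon)$ is controlled uniformly in $t$ by $k_2$, $\|C^{-1}\|$, $\|C_\varepsilon^{-1}\|$, and $\|\gamma_t(T)\|\le k_2\|T\|$. Thus
\[
\|\alpha_t(T)-\tilde T_\varepsilon\|\le K\varepsilon
\]
for a constant $K$ depending on $\|T\|$, $k_2$, $\|C^{-1}\|$, and an upper bound on $\|C_\varepsilon^{-1}\|$ valid for $\varepsilon$ below some threshold, but not on $t\in(0,1]$. The only delicate step is verifying that $\|C_\varepsilon^{-1}\|$ stays bounded uniformly as $\varepsilon$ shrinks, which is exactly what the Neumann series estimate above provides; the rest is a mechanical norm computation.
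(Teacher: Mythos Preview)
Your proof is correct and follows essentially the same approach as the paper: a telescoping split of $\gamma_t(C)^{-1}\gamma_t(T)\gamma_t(C)-\gamma_t(C_\varepsilon)^{-1}\gamma_t(T)\gamma_t(C_\varepsilon)$ together with the resolvent identity and the bound $\|\gamma_t(C-C_\varepsilon)\|\le k_2\varepsilon$. If anything, your version is more explicit than the paper's, since you spell out via $\gamma_t(C)^{-1}=\gamma_t(C^{-1})$ and the Neumann series why the remaining factors are bounded uniformly in $t$, whereas the paper simply stops at an expression still containing $\|\gamma_t(C)^{-1}\|$ and $\|\gamma_t(C_\varepsilon)^{-1}\|$.
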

\begin{proof}
One should take $\varepsilon$ small enough to provide invertibility of $\gamma(C_\varepsilon)$. 
Then
\begin{eqnarray*}
\|\alpha_t(T)-\tilde T_\varepsilon\|&\leq &\|\gamma_t(C)^{-1}-\gamma_t(C_\varepsilon)^{-1}\|\cdot\|\gamma_t(T)\|\cdot\|\gamma_t(C)\|
\\
&&+
\|\gamma_t(C_\varepsilon)^{-1}\|\cdot\|\gamma_t(T)\|\cdot\|\gamma_t(C)-\gamma_t(C_\varepsilon)\|\\
&\leq&\|\gamma_t(C-C_\varepsilon)\|\cdot\|\gamma_t(C)^{-1}\|\cdot\|\gamma_t(C_\varepsilon)^{-1}\|\cdot\|\gamma_t(C)\|
\\
&&+\|\gamma_t(C_\varepsilon)^{-1}\|\cdot\|\gamma_t(T)\|\cdot\|\gamma_t(C-C_\varepsilon)\|\\
&<&k_2\varepsilon(\|\gamma_t(C)^{-1}\|\cdot\|\gamma_t(C_\varepsilon)^{-1}\|\cdot\|\gamma_t(C)\|+\|\gamma_t(C_\varepsilon)^{-1}\|\cdot\|\gamma_t(T)\|).
\end{eqnarray*}
\end{proof}

\begin{lem}
$\alpha_t(T)\in C^*(\mathbb R)$ for any $T\in C^*_u(\mathbb Z)$.

\end{lem}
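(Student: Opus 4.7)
The plan is to realize $\alpha_t(T)$ as a norm limit of explicit locally compact, finite-propagation operators on $L^2(\mathbb R)$, and then invoke that $C^*(\mathbb R)$ is norm-closed. First, since $\alpha_t$ is an isometric $*$-homomorphism and finite-propagation operators are norm-dense in $C^*_u(\mathbb Z)$, I reduce to the case where $T$ itself has finite propagation, say propagation $N$.

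Second, I use the compactly supported vectors $\psi_n^{t,\varepsilon}=\gamma_t(C_\varepsilon)\varphi_n^t$ introduced before Lemma~3: their supports lie in $[t(n-M_\varepsilon-1),t(n+M_\varepsilon+1)]$. From these I form the candidate approximant
\[
S_\varepsilon f := \sum_{|n-m|\leq N} T_{nm}\,\psi_n^{t,\varepsilon}\,\langle\psi_m^{t,\varepsilon},f\rangle,\qquad f\in L^2(\mathbb R).
\]
An estimate modelled on the proof of Lemma~3, interpolating one factor of $\psi^t$ at a time and using $\|\psi_n^t-\psi_n^{t,\varepsilon}\|\leq k_2\varepsilon\sqrt{2/3}$ from Lemma~2, gives $\|\alpha_t(T)-S_\varepsilon\|=O(\varepsilon)$ with a constant depending only on $\|T\|$, $\|G\|$ and $k_2$.

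Third, I verify that $S_\varepsilon\in C^*(\mathbb R)$ by checking the two defining properties. Finite propagation is immediate from the support estimates: if $\supp f\subset I$ then $\langle\psi_m^{t,\varepsilon},f\rangle$ is nonzero only when $\supp\psi_m^{t,\varepsilon}\cap I\neq\emptyset$, and for each such $m$ the image vectors $\psi_n^{t,\varepsilon}$ with $|n-m|\leq N$ have support within distance $t(N+2M_\varepsilon+2)$ of $I$. For local compactness, note that for compactly supported $g$ the operator $\pi(g)S_\varepsilon$ has range in the finite-dimensional span of those $\psi_n^{t,\varepsilon}$ whose supports meet $\supp g$, so it is of finite rank; general $g\in C_0(\mathbb R)$ follows by uniform approximation by compactly supported functions, and $S_\varepsilon\pi(g)$ is handled symmetrically.

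Letting $\varepsilon\to 0$ and using norm-closedness of $C^*(\mathbb R)$ concludes the proof. The main obstacle is that the orthonormal vectors $\psi_n^t$ themselves have global support, since $C=G^{-1/2}$ is not of finite propagation, so $\alpha_t(T)$ is not literally of finite propagation; finite propagation has to be built in by hand via the truncated system $\{\psi_n^{t,\varepsilon}\}$, whose compact supports come from the finite propagation of $C_\varepsilon$, at the controlled cost of an $O(\varepsilon)$ perturbation in norm.
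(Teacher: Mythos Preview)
Your proposal is correct and follows essentially the same route as the paper: reduce to finite-propagation $T$, replace the orthonormal system $\{\psi_n^t\}$ by the compactly supported perturbation $\{\psi_n^{t,\varepsilon}\}$ coming from the finite-propagation truncation $C_\varepsilon$ of $C$, check that the resulting approximant has finite propagation and is locally compact (finite-rank after cutting by compactly supported $g$), and take the norm limit. Your operator $S_\varepsilon=V_\varepsilon T V_\varepsilon^*$ with $V_\varepsilon\delta_n=\psi_n^{t,\varepsilon}$ is exactly the operator the paper works with in formula~(\ref{4}), and your $O(\varepsilon)$ approximation step is the content of Lemma~3.
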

\begin{proof}
As $T\in C^*_u(\mathbb Z)$, it can be approximated by finite propagation operators $T^N$, $N\in\mathbb N$, with propagation $N$. This means that the matrix of $T^N$ has the band structure ($T^N_{nm}=0$ when $|m-n|>c$ for some $c>0$). Then we may write $T^N$ as a matrix with $2N+1$ diagonals: $T^N\delta_n=\sum_{k=-N}^N\lambda_{n,k}\delta_{n+k}$, where the numbers $\lambda_{n,k}$ are uniformly bounded by $\|T\|$. 

As $\alpha_t(T^N)$ can be approximated by operators of the form $\tilde T^N_\varepsilon$, it suffices to show that $\tilde T^N_\varepsilon\in C^*(\mathbb R)$.

Let $f\in C_0(\mathbb R)$ has compact support, say $[a,b]\subset\mathbb R$. Then 
\begin{eqnarray}\label{4}
\tilde T^N_\varepsilon\pi(f)(g)&=&\sum_{n,m\in\mathbb Z}T^N_{nm}\langle \psi_m^{t,\varepsilon},fg\rangle\psi_n^{t,\varepsilon}=\sum_{n,m\in\mathbb Z}T^N_{nm}\langle \gamma_t(C\varepsilon)\varphi_m^t,fg\rangle \gamma_t(C_\varepsilon)\varphi_n^t\nonumber
\\
&=&\sum_{n\in\mathbb Z}\sum_{k=-N}^N\lambda_{n,k}\langle \gamma_t(C_\varepsilon)\varphi_{n+k}^t,fg\rangle \gamma_t(C_\varepsilon)\varphi_n^t.
\end{eqnarray}    

As $\supp(fg)\subset[a,b]$ and as propagation of $C_\varepsilon\leq M_\varepsilon$, we have $\supp(\gamma_t(C_\varepsilon)\varphi_n^t)\subset[t(n-1-M_\varepsilon),t(n+1+M_\varepsilon)]$.  Therefore, $\langle \gamma_t(C_\varepsilon)\varphi_{n+k}^t,fg\rangle\neq 0$ only when $[a,b]\cap [t(n-1-M_\varepsilon),t(n+1+M_\varepsilon)]\neq\emptyset$, thus the sum (\ref{4}) contains only finite number of non-zero summands, i.e. $\operatorname{Ran}\tilde T^N_\varepsilon\pi(f)$ is finitedimensional. Similarly, $\operatorname{Ran}\pi(f)\tilde T^N_\varepsilon$ is finitedimensional. Thus $\alpha_t(T)\pi(f)$ and $\pi(f)\alpha_t(T)$ are compact. Approximation of functions in $C_0(\mathbb R)$ by functions $f$ with finite support proves that $\alpha_t(T)$ is locally compact.

Similarly one can show that $\alpha_t(T)$ is of finite propagation. Indeed, let $f,g\in C_0(\mathbb R)$ are such that the distance between their supports is greater than $R$. Then
$$  
\pi(f)\alpha_t(T^N_\varepsilon)\pi(g)(h)=\sum_{n\in\mathbb Z}\sum_{k=-N}^N\lambda_{n,k}\langle \gamma_t(C_\varepsilon)\varphi_{n+k}^t,gh\rangle f\gamma_t(C_\varepsilon)\varphi_n^t.
$$
We have $\gamma_t(C_\varepsilon)\varphi_{n+k}^t,gh\rangle=0$ when $\supp g\cap[t(n-k-1-N),t(n+k+1+N)]=\emptyset$, while $f\gamma_t(C_\varepsilon)\varphi_n^t=0$ when $\supp f\cap[t(n-1-M_\varepsilon),t(n+1+M_\varepsilon)]=\emptyset$, so if $R$ is sufficiently great then their product vanishes.
\end{proof} 

The second map, $\beta_t:C^*(\mathbb R)\to C^*_u(\mathbb Z)$, goes in the opposite direction and is not a homomorphism (but linear and even completely positive). In fact, it extends to a completely positive map from a greater $C^*$-algebra $C^*_p(\mathbb R)\supset C^*(\mathbb R)$, which is the norm closure of all bounded operators of finite propagation without the requirement of local compactness . 
For $S\in C^*_p(\mathbb R)$ set $(\beta_t(S))_{nm}=\langle \psi_n^t,S\psi_m^t\rangle$. Then the operator $\beta_t(S)$ can be written as $\beta_t(S)(\delta_m)=\sum_{n\in\mathbb Z}\langle \psi_n^t,S\psi_m^t\rangle\delta_n$. Recall that we denote by $U_t:l^2(\mathbb Z)\to L^2(\mathbb R)$ the isometry that maps the standard basis $\{\delta_n\}_{n\in\mathbb Z}$ of $l^2(\mathbb Z)$ to the basis $\{\psi_n^t\}_{n\in\mathbb Z}$ of $H_t\subset L^2(\mathbb R)$. Then $\beta_t(S)=U^*_tSU$. In particular, this implies that $\beta_t(S)$ is bounded for any bounded operator $S$.

\begin{lem}\label{L5}
Let $S\in C^*_p(\mathbb R)$. Then $\beta_t(S)\in C^*_u(\mathbb Z)$ for any $t>0$.

\end{lem}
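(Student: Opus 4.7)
The plan is to exploit two facts: $\beta_t$ is norm continuous (since $\beta_t(S)=U_t^*SU_t$ gives $\|\beta_t(S)\|\leq\|S\|$) and $C^*_u(\mathbb Z)$ is norm closed. It therefore suffices, by density, to treat a single finite-propagation operator $S$ of propagation $R$, and show that $\beta_t(S)$ is a norm limit of finite-propagation operators on $l^2(\mathbb Z)$.

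First I would replace the orthonormal family $\{\psi_n^t\}$ by the perturbed family $\{\psi_n^{t,\varepsilon}=\gamma_t(C_\varepsilon)\varphi_n^t\}$ introduced before Lemma~4, in which $C_\varepsilon$ has finite propagation $M_\varepsilon$ and $\|C-C_\varepsilon\|<\varepsilon$. The decisive advantage is the support bound $\supp\psi_n^{t,\varepsilon}\subset[t(n-1-M_\varepsilon),t(n+1+M_\varepsilon)]$. Let $V_t^\varepsilon:l^2(\mathbb Z)\to L^2(\mathbb R)$ be the bounded operator $\delta_n\mapsto\psi_n^{t,\varepsilon}$, and set $\beta_t^\varepsilon(S):=(V_t^\varepsilon)^*SV_t^\varepsilon$, whose matrix entry in position $(n,m)$ is $\langle\psi_n^{t,\varepsilon},S\psi_m^{t,\varepsilon}\rangle$.

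Next I would estimate $\|\beta_t(S)-\beta_t^\varepsilon(S)\|$. Using
\[
\beta_t(S)-\beta_t^\varepsilon(S)=(U_t-V_t^\varepsilon)^*SU_t+(V_t^\varepsilon)^*S(U_t-V_t^\varepsilon),
\]
the boundedness of the map $\delta_n\mapsto\varphi_n^t$ (whose norm is controlled by $\|G\|^{1/2}$ and hence is independent of $t$), and Lemma~3 applied to $C-C_\varepsilon$, one obtains $\|U_t-V_t^\varepsilon\|\leq k_2\|G\|^{1/2}\varepsilon$ and, after bounding the two cross terms in the standard way, $\|\beta_t(S)-\beta_t^\varepsilon(S)\|\leq K\varepsilon\|S\|$ for a constant $K$ independent of both $t$ and $\varepsilon$.

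Finally I would check that $\beta_t^\varepsilon(S)$ itself has finite propagation in $l^2(\mathbb Z)$: since $S$ has propagation $R$, the entry $\langle\psi_n^{t,\varepsilon},S\psi_m^{t,\varepsilon}\rangle$ vanishes once the supports of $\psi_n^{t,\varepsilon}$ and $\psi_m^{t,\varepsilon}$ lie at distance greater than $R$, which by the support bound is guaranteed as soon as $|n-m|>R/t+2(1+M_\varepsilon)$. Hence $\beta_t^\varepsilon(S)\in C^*_u(\mathbb Z)$, and letting $\varepsilon\to 0$ gives $\beta_t(S)\in C^*_u(\mathbb Z)$. The main obstacle is keeping the two independent approximations (first $S\in C^*_p(\mathbb R)$ by a finite-propagation operator, then $\psi_n^t$ by $\psi_n^{t,\varepsilon}$) from feeding into one another, but because both are controlled by Lemma~3 uniformly in $t$, they can be executed in succession without any accumulated loss.
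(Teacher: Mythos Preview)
Your proposal is correct and follows essentially the same route as the paper: reduce to finite-propagation $S$, replace $U_t$ by the perturbed map $V_t^\varepsilon$ (the paper writes this as $DU_t$ with $D=C_\varepsilon C^{-1}$, so that $DU_t\delta_n=\psi_n^{t,\varepsilon}$), observe that $(V_t^\varepsilon)^*SV_t^\varepsilon$ is a band matrix thanks to the support bound on $\psi_n^{t,\varepsilon}$, and let $\varepsilon\to0$. The only slip is a label: the inequality $\|\gamma_t(C-C_\varepsilon)\|\le k_2\,\varepsilon$ that you invoke is Lemma~2, not Lemma~3.
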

\begin{proof}
It suffices to show that $\beta_t(S)\in C^*_u(\mathbb Z)$ for operators of finite propagation. For an operator $S$ of finite propagation set $\tilde S=U_t^*D^*SDU_t$, where $D=C_\varepsilon C^{-1}$. As $\|1-D\|<\varepsilon\|C^{-1}\|$, $\beta_t(S)$ can be approximated by operators of the form $\tilde S$. Let us show that $\tilde S$ has finite propagation, which means, for discrete spaces, that the matrix of this operator is a band matrix. We have 
$$
\tilde S_{nm}=\langle \psi_n^t,D^*SD\psi_m^t\rangle=\langle D\psi_n^t,SD\psi_m^t\rangle=\langle \psi_n^{t,\varepsilon},S\psi_m^{t,\varepsilon}\rangle.
$$
As $\supp\psi_n^{t,\varepsilon}\in[t(n-1-M_\varepsilon),t(n+1+M_\varepsilon)]$ and as $S$ has finite propagation, $\tilde S_{nm}=0$ when $|n-m|$ is sufficiently great.
\end{proof}

Note that $\beta_t\circ\alpha_t(S)=p_tS|_{H_t}$, in particular, this means that $p_tS|_{H_t}$ is locally compact for any $S\in C^*_p(\mathbb R)$.

\section{The fiber over 0}

Let $L_0^\infty(\mathbb R)$ denote the norm closure of $\cup_{N}L^\infty([-N,N])\subset L^\infty(\mathbb R)$. The group $\mathbb R$ acts on $L_0^\infty(\mathbb R)$ by translations. Set $A_0=L^\infty_0(\mathbb R)\rtimes\mathbb R$. 

\begin{lem}
$A_0\subset C^*_p(\mathbb R)$.

\end{lem}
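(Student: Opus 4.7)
The plan is to realize $A_0$ concretely on $L^2(\mathbb{R})$ as the norm closure of the $*$-subalgebra $\mathcal{A}$ generated by the multiplication operators $M_f$ for $f\in L^\infty_0(\mathbb{R})$ and the translation unitaries $(U_sh)(x)=h(x-s)$, $s\in\mathbb{R}$. This is the standard covariant realization of $L^\infty_0(\mathbb{R})\rtimes\mathbb{R}$ and it is faithful by amenability of $\mathbb{R}$. Once this is set up, it suffices to verify that every element of $\mathcal{A}$ has finite propagation, for then $A_0=\overline{\mathcal{A}}\subset C^*_p(\mathbb{R})$ by definition of the latter.

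To cut a general word in the generators down to a manageable normal form, I would use the covariance relation $U_sM_f=M_{T_sf}U_s$, where $(T_sf)(x)=f(x-s)$, to rewrite any monomial $M_{f_1}U_{s_1}M_{f_2}U_{s_2}\cdots M_{f_n}U_{s_n}$ as a single operator $M_FU_S$ with $S=s_1+\cdots+s_n\in\mathbb{R}$ and $F\in L^\infty_0(\mathbb{R})$ an appropriate pointwise product of translates of the $f_i$. Thus $\mathcal{A}$ is just the linear span of operators of the form $M_FU_S$.

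The heart of the argument is then a direct propagation estimate. For $f_1,f_2\in C_0(\mathbb{R})$ and $h\in L^2(\mathbb{R})$,
\[
\bigl(\pi(f_1)M_FU_S\pi(f_2)h\bigr)(x)=f_1(x)F(x)f_2(x-S)h(x-S),
\]
and this vanishes identically as soon as $d(\supp f_1,\supp f_2)>|S|$, since a non-vanishing integrand would require some $x$ with $x\in\supp f_1$ and $x-S\in\supp f_2$ simultaneously, forcing a pair of points at distance $|S|$. Hence $M_FU_S$ has propagation at most $|S|$, every element of $\mathcal{A}$ has finite propagation, and $A_0=\overline{\mathcal{A}}\subset C^*_p(\mathbb{R})$ follows.

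The only real obstacle is the bookkeeping in the first step: making sure the concrete $C^*$-algebra on $L^2(\mathbb{R})$ that I work with really does coincide with the crossed product $L^\infty_0(\mathbb{R})\rtimes\mathbb{R}$ the author has in mind. The analytic content — the propagation estimate above — is essentially a one-line computation, and the closure under $*$ and sums is immediate since $M_F^*=M_{\bar F}$, $U_S^*=U_{-S}$, and sums have propagation bounded by the maximum of the summands'.
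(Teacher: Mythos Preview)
Your propagation estimate for $M_FU_S$ is clean and correct, but the identification in the first step is wrong, and this is exactly the ``bookkeeping'' you flagged as the only real obstacle. The norm closure of $\operatorname{span}\{M_FU_S\}$ is \emph{not} the crossed product $L^\infty_0(\mathbb R)\rtimes\mathbb R$. That description is valid for crossed products by \emph{discrete} groups; for a non-discrete group like $\mathbb R$ the individual unitaries $U_s$ lie only in the multiplier algebra $M(A_0)$, not in $A_0$ itself. Concretely, every $S_{f,g}=M_f\int g(y)U_y\,dy$ with $f,g$ compactly supported is Hilbert--Schmidt (its kernel $f(x)g(x-z)$ is in $L^2(\mathbb R^2)$), so the image of $A_0$ in $\mathbb B(L^2(\mathbb R))$ consists of compact operators, whereas $U_s\in\overline{\mathcal A}$ is certainly not compact. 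Thus $A_0\ne\overline{\mathcal A}$. Nor is the inclusion $A_0\subset\overline{\mathcal A}$ obvious: the Riemann sums $\sum_k g(y_k)M_fU_{y_k}\Delta y$ do \emph{not} converge in norm to $M_f\int g(y)U_y\,dy$, because $y\mapsto U_y$ is only strongly continuous (after Fourier transform the Riemann-sum symbol is $2\pi N$-periodic in $\xi$ and cannot approximate $\hat g\in C_0$ uniformly).

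The fix is immediate and is exactly what the paper does: use the genuine dense subspace of the crossed product, namely linear combinations of $S_{f,g}(u)(x)=\int f(x)g(y)u(x-y)\,dy$ with $f\in L^\infty_0$ and $g\in C_c(\mathbb R)$, and run your propagation computation on those. If $\operatorname{supp} g\subset[-M,M]$ and $d(\operatorname{supp}\varphi,\operatorname{supp}\psi)>M$, then $\varphi(x)f(x)g(y)\psi(x-y)=0$ for all $x,y$, so $\pi(\varphi)S_{f,g}\pi(\psi)=0$. This is your argument for $M_FU_S$, integrated against $g(y)\,dy$; the point is that all the $U_y$ appearing in the integral have propagation $\le M$ simultaneously, so the integral inherits that bound even though the integral itself cannot be norm-approximated by the partial sums.
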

\begin{proof}
Let $f\in L^\infty([-N,N])$, and let $g\in C_0(\mathbb R)$ be a continuous function with compact support. The linear combinations of operators of the form $S_{f,g}$, where 
$$
S_{f,g}(u)(x)=\int f(x)g(y)u(x-y)\,dy, 
$$
are dense in $L^\infty_0(\mathbb R)\rtimes\mathbb R$, so it suffices to show that $S_{f,g}\in C^*_p(\mathbb R)$. Let $\operatorname{supp}(g)\subset [-M,M]$, and let $\varphi,\psi\in C_0(\mathbb R)$ have supports at the distance greater than $L$. Then 
$$
(\pi(\varphi) S_{f,g}\pi(\psi) (u))(x)=\varphi(x)\int f(x)g(y)\psi(x-y)u(x-y)\,dy=0
$$ 
if $L>M$.   
\end{proof} 

Recall that $C$ is the transition matrix that maps $\varphi_n^t$ to $\psi_n^t$, i.e. $\psi_n^t=\sum_{m\in\mathbb Z}C_{mn}\varphi_m^t$. We have defined $C$ by $C=G^{-1/2}$, where $G$ is the Gram matrix for $\{\varphi_n^t\}_{n\in\mathbb N}$. We need the following technical result. 

\begin{lem}\label{lemmaC}
The series $\sum_{n\in\mathbb Z}|C_{nm}|$ and $\sum_{m\in\mathbb Z}|C_{nm}|$ converge. The sums $\sum_{n\in\mathbb Z}|C_{nm}|$ (resp., $\sum_{m\in\mathbb Z}|C_{nm}|$) are bounded uniformly with respect to $m$ (resp., to $n$).

\end{lem}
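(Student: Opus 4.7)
The plan is to exploit the translation-invariance of $G$. Since $G_{nm}$ depends only on $n-m$ (as computed in the proof of the first lemma: $G_{nn}=2/3$, $G_{n,n\pm 1}=1/6$, and $0$ otherwise), the operator $G$ is Toeplitz; equivalently, it commutes with the bilateral shift $S$. Because $C=G^{-1/2}$ is obtained by functional calculus from $G$ and lies in the norm closure of polynomials in $G$, it also commutes with $S$. Hence $C$ is Toeplitz as well: there exists a sequence $(c_k)_{k\in\mathbb Z}$ with $C_{nm}=c_{n-m}$. Both sums in the statement therefore reduce to $\sum_{k\in\mathbb Z}|c_k|$, which is manifestly independent of $m$ (resp.\ $n$). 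The problem is thus reduced to showing that the convolution kernel $(c_k)$ is in $\ell^1(\mathbb Z)$.

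To obtain $\ell^1$-summability I would use the binomial series. Write $G=\tfrac{2}{3}(I+B)$ where $B=\tfrac{1}{4}(S+S^*)$. Viewed as an operator on the Banach space $\ell^1(\mathbb Z)$, one has $\|B\|_{\ell^1\to\ell^1}=\sup_m\sum_n|B_{nm}|=\tfrac{1}{2}$, and more generally $\|B^k\|_{\ell^1\to\ell^1}\leq 2^{-k}$. Since $|\binom{-1/2}{k}|\leq 1$ for all $k\geq 0$, the series
\[
C=\sqrt{3/2}\,(I+B)^{-1/2}=\sqrt{3/2}\sum_{k=0}^{\infty}\binom{-1/2}{k}B^{k}
\]
converges absolutely in the $\ell^1\to\ell^1$ operator norm. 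As each $B^k$ is a Toeplitz operator whose convolution kernel is finitely supported (and hence in $\ell^1$), the limit $C$ is a Toeplitz operator with $\ell^1$-kernel, giving $\sum_k|c_k|<\infty$.

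There is really no hard step: the translation invariance collapses the two sums to a single one and removes the uniformity issue for free, and the $\ell^1\to\ell^1$ norm estimate on $B$ together with the convergence of $\sum_k|\binom{-1/2}{k}|\,2^{-k}$ handles summability. The only point requiring a line of care is justifying that $C$ inherits the Toeplitz structure from $G$, which I would do by invoking the norm-density of polynomials in $G$ within the $C^*$-subalgebra they generate, as used already in the discussion just after the first lemma.
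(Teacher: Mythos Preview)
Your argument is correct and takes a genuinely different route from the paper's. The paper identifies $l^2(\mathbb Z)$ with $L^2$ of the circle via Fourier series, so that $G$ becomes multiplication by the smooth strictly positive function $\tfrac{2}{3}+\tfrac{1}{3}\cos x$ and $C$ becomes multiplication by its $(-1/2)$-power; smoothness then gives rapid decay of the Fourier coefficients $a_n$, and $C_{nm}=a_{|n-m|}$ finishes the proof. By contrast, you stay entirely in the discrete picture: you observe that $C$ inherits the Toeplitz structure from $G$ (via commutation with the shift), and then bound the convolution kernel in $\ell^1$ directly by summing the binomial series $(I+B)^{-1/2}$ in the $\ell^1\!\to\!\ell^1$ operator norm, using $\|B\|_{\ell^1\to\ell^1}=1/2$ and $\lvert\binom{-1/2}{k}\rvert\le 1$. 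The paper's approach yields much more than is needed (rapid decay, not merely $\ell^1$), while yours is more elementary, avoids Fourier analysis altogether, and even gives an explicit numerical bound $\sum_k|c_k|\le\sqrt{3/2}\sum_k 2^{-k}$. One small point worth making explicit in your write-up: the $\ell^1\!\to\!\ell^1$ limit of the partial sums has the same matrix entries as the $\ell^2\!\to\!\ell^2$ limit $C=G^{-1/2}$, since both norm convergences imply entrywise convergence; this closes the loop between the functional-calculus definition of $C$ and your $\ell^1$ estimate.
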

\begin{proof}
When working with matrices with the same entries along any diagonal it is convenient to identify $l^2(\mathbb Z)$ with the square-integrable functions on the circle, and the basis $\{\delta_n\}_{n\in\mathbb Z}$ with the basis $\{e^{inx}\}$. Under this identification, the matrix $B_{nm}=b_{n-m}$ can be identified with the operator of multiplication by the function $\sum_{n\in\mathbb N}b_ne^{inx}$. Thus, the Gram matrix $G$ corresponds to the invertible function $\frac{2}{3}+\frac{1}{3}\cos x$, and the matrix $C$ corresponds to the function $(\frac{2}{3}+\frac{1}{3}\cos x)^{-1/2}$. As this function is smooth, its Fourier coefficients $a_n$, $n=0,1,\ldots$, are of rapid decay, i.e. $a_n=o(n^{-k})$ for any $k\in\mathbb N$. Therefore, the series $\sum_{n\in\mathbb N}|a_n|$ is convergent. As $C_{nm}=a_{|n-m|}$, the series $\sum_{n\in\mathbb Z}|C_{nm}|$ and $\sum_{m\in\mathbb Z}|C_{nm}|$ converge. Uniform boundedness is obvious.
\end{proof}

Denote the map $t\mapsto\beta_t(S)$ by $\beta^S:(0,\infty)\to C^*_u(\mathbb Z)$.

\begin{thm}\label{continuity}
The map $\beta^S$ is norm-continuous on $(0,\infty)$ for any $S\in A_0$. 

\end{thm}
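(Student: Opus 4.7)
The plan is to prove norm continuity at a fixed $t_0\in(0,\infty)$ by upgrading pointwise continuity of the matrix entries of $\beta_t(S)$ to Hilbert--Schmidt convergence, which then dominates the operator norm. Since $\beta_t(S)=U_t^*SU_t$ with $U_t$ an isometry, each $\beta_t$ is a contraction, so by density it is enough to check continuity on a spanning set of $A_0$. A natural choice, coming from the proof that $A_0\subset C^*_p(\mathbb R)$, is the operators $S=S_{f,g}$ with $f\in L^\infty$ supported in $[-N,N]$ and $g\in C_c(\mathbb R)$ supported in $[-M,M]$.

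Expanding $\psi_n^t=\sum_k C_{kn}\varphi_k^t$ gives
$$\beta_t(S)_{nm}=\sum_{k,l\in\mathbb Z}\overline{C_{kn}}\,C_{lm}\,B_{kl}(t),\qquad B_{kl}(t):=\langle\varphi_k^t,S\varphi_l^t\rangle.$$
From $\supp\varphi_k^t=[t(k-1),t(k+1)]$ together with $\supp f\subset[-N,N]$ and $\supp g\subset[-M,M]$, we see that $B_{kl}(t)=0$ unless $|k|\leq N/t+1$ and $|l|\leq (N+M)/t+1$, so only finitely many $(k,l)$ contribute, uniformly for $t$ in any compact subinterval of $(0,\infty)$. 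The map $t\mapsto\varphi_l^t$ is $L^2$-norm continuous (the dilations act strongly continuously on $L^2(\mathbb R)$, and this is a direct dominated-convergence check on the tent function), so each $B_{kl}(t)$ is continuous in $t$ and therefore so is each entry $\beta_t(S)_{nm}$.

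To upgrade pointwise continuity to $\HS$-continuity we need a uniform square-summable dominating bound on $|\beta_t(S)_{nm}|$. By the proof of Lemma~\ref{lemmaC}, $C_{kn}=a_{|k-n|}$ where $\{a_j\}$ is a sequence of rapid decay, being the sequence of Fourier coefficients of the smooth function $(2/3+\tfrac13\cos x)^{-1/2}$. For $t\in[t_0/2,2t_0]$ and $|n|$ large compared with $N/t_0$, every $k$ in the effective range $|k|\leq N/t+1$ satisfies $|k-n|\geq |n|/2$, whence
$$\sum_{|k|\leq N/t+1}|C_{kn}|\leq C_p(1+|n|)^{-p}\qquad\text{for every }p\in\mathbb N.$$
Combined with the analogous bound for $m$ and with the crude estimate $|B_{kl}(t)|\leq\tfrac23\|S\|$, we obtain
$$|\beta_t(S)_{nm}|\leq C_p(1+|n|)^{-p}(1+|m|)^{-p}\qquad(t\in[t_0/2,2t_0]),$$
which is square-summable in $(n,m)$ for any $p>1$.

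Dominated convergence then yields $\|\beta_t(S)-\beta_{t_0}(S)\|_{\HS}\to 0$ as $t\to t_0$, hence $\|\beta_t(S)-\beta_{t_0}(S)\|\to 0$. The only step carrying real content is the joint rapid decay estimate in both indices; once the Schwartz-type decay of $\{a_j\}$ is in hand from the smoothness of the Gram symbol, everything else is bookkeeping through dominated convergence.
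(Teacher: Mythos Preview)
Your proof is correct. Both your argument and the paper's hinge on the same two observations: reducing to $S=S_{f,g}$ with compactly supported $f,g$, and noting that in the expansion $\beta_t(S)_{nm}=\sum_{k,l}C_{kn}C_{lm}\langle\varphi_k^t,S\varphi_l^t\rangle$ only finitely many $(k,l)$ contribute, uniformly on a compact neighbourhood of $t_0$. The routes then diverge. The paper estimates the operator norm directly: it writes $\|(\beta_t(S)-\beta_{t_0}(S))a\|^2$, splits via $\psi_n^t-\psi_n^{t_0}$ into two symmetric pieces, and bounds the resulting finite sums using only the $\ell^1$-summability $\sum_m|C_{lm}|<\infty$ from Lemma~\ref{lemmaC}. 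You instead prove entrywise continuity of the matrix, exhibit a uniform Hilbert--Schmidt majorant via the \emph{rapid} decay of the Fourier coefficients $a_j$, and invoke dominated convergence in $\ell^2(\mathbb Z^2)$. Your approach is cleaner and avoids the combinatorial estimate in the paper, at the small cost of using the full rapid decay (already present in the proof of Lemma~\ref{lemmaC}) rather than just $\ell^1$-summability; it also shows the stronger conclusion that $t\mapsto\beta_t(S_{f,g})$ is continuous into the Hilbert--Schmidt class.
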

\begin{proof}
Note that the linear combinations of operators $S_{f,g}$, $S_{f,g}(u)(x)=\int f(x)g(y)u(x-y)\,dy$, with $f\in L^\infty(\mathbb R)$, $g\in C_0(\mathbb R)$ of finite support are dense in $A_0=L^\infty_0(\mathbb R)\rtimes\mathbb R$, so it suffices to show continuity of the map $t\mapsto\beta_t(S)$ for $S=S_{f,g}$ for $f$ and $g$ with compact support.

Let $\|S_{f,g}\|=1$, $\operatorname{supp}(f),\operatorname{supp}(g)\subset[-N,N]$, $a=\sum_{n\in\mathbb Z}a_n\delta_n\in l^2(\mathbb Z)$, $\|a\|=1$. Then
\begin{eqnarray}
\|(\beta_t(S_{f,g})-\beta_{t_0}(S_{f,g}))a\|^2&=&\sum_{n\in\mathbb Z}\Bigl(\sum_{m\in\mathbb Z}(\langle\psi_n^t,S_{f,g}\psi_m^t\rangle-\langle\psi_n^{t_0},S_{f,g}\psi_m^{t_0}\rangle)a_m\Bigr)^2\nonumber\\
&\leq&\sum_{n\in\mathbb Z}\Bigl(\sum_{m\in\mathbb Z}\langle\psi_n^t-\psi_n^{t_0},S_{f,g}\psi_m^t\rangle a_m\Bigr)^2\label{111}\\
&+&\sum_{n\in\mathbb Z}\Bigl(\sum_{m\in\mathbb Z}(\langle\psi_n^{t_0},S_{f,g}(\psi_m^t-\psi_m^{t_0})\rangle a_m\Bigr)^2.\label{112}
\end{eqnarray}
We shall estimate the first summand (\ref{111}). The second summand (\ref{112}) can be estimated in the same way (or, passing to the adjoint of $S_{f,g}$).

Recall that $\psi_n^t=\sum_{k\in\mathbb Z}C_{kn}\varphi_k^t$. Then
\begin{equation}\label{113}
\sum_{n\in\mathbb Z}\Bigl(\sum_{m\in\mathbb Z}\langle\psi_n^t-\psi_n^{t_0},S_{f,g}\psi_m^t\rangle a_m\Bigr)^2=
\sum_{n\in\mathbb Z}\Bigl(\sum_{m,k,l\in\mathbb Z}C_{kn}C_{lm}\langle\varphi_k^t-\varphi_k^{t_0},S_{f,g}\varphi_l^t\rangle a_m\Bigr)^2.
\end{equation}

Let $t\in[\frac{t_0}{2},2t_0]$.
As the supports of $f$ and $g$ lie in $[-N,N]$, $S_{f,g}\varphi_l^t=0$ for $|l|>(N+2)/t$, hence the sum over $l$ is finite, over $|l|\leq 2(N+2)/t_0$. Also the support of $S_{f,g}\varphi_l^t$ lies in $[(l-1)t-N,(l+1)t+N)]$, hence there are only finitely many $k$ such that $\langle\varphi_k^t-\varphi_k^{t_0},S_{f,g}\varphi_l^t\rangle\neq 0$. In other words, the sum in
(\ref{113}) can be written as 
\begin{equation}\label{114}
\sum_{n\in\mathbb Z}\Bigl(\sum_{|k|,|l|\leq M}\sum_{m\in\mathbb Z}C_{kn}C_{lm}\langle\varphi_k^t-\varphi_k^{t_0},S_{f,g}\varphi_l^t\rangle a_m\Bigr)^2
\end{equation}
for some $M$.

For any $\varepsilon>0$ there exists $\delta>0$ such that $\|\varphi_k^t-\varphi_k^{t_0}\|_{L^2}<\frac{\varepsilon}{M^2}$ for any $|k|\leq M$ when $|t-t_0|<\delta$.

Fix $k$ and $l$, and estimate 
\begin{eqnarray*}
&&\sum_{n\in\mathbb Z}\Bigl(\sum_{m\in\mathbb Z}C_{kn}C_{lm}\langle\varphi_k^t-\varphi_k^{t_0},S_{f,g}\varphi_l^t\rangle a_m\Bigr)^2\\
&&\leq
\sum_{n\in\mathbb Z}\Bigl(\sum_{m\in\mathbb Z}|C_{kn}C_{lm}|\frac{\varepsilon}{M^2}\|S_{f,g}\|\|\varphi_l^t\|_{L^2}\|a\|\Bigr)^2=
\sum_{n\in\mathbb Z}|C_{kn}|\Bigl(\sum_{m\in\mathbb Z}|C_{lm}|\Bigr)^2\frac{\varepsilon^2}{M^4}.
\end{eqnarray*}

By Lemma \ref{lemmaC} the series $\sum_{m\in\mathbb Z}|C_{lm}|$ converges, hence is bounded by some $L$, hence 
$$
\sum_{n\in\mathbb Z}|C_{kn}|\Bigl(\sum_{m\in\mathbb Z}|C_{lm}|\Bigr)^2\frac{\varepsilon^2}{M^4}\leq L^3\frac{\varepsilon^2}{M^4}. 
$$ 
For shortness' sake set $x_{nmkl}=C_{kn}C_{lm}\langle\varphi_k^t-\varphi_k^{t_0},S_{f,g}\varphi_l^t\rangle a_m$. We have shown that 
$$
\sum_{n\in\mathbb Z}\Bigl(\sum_{m\in\mathbb Z}x_{nmkl}\Bigr)^2\leq L^3\varepsilon^2/M^2
$$
for any $k,l$.

Coming back to (\ref{114}) and using $2xx'\leq x^2+(x')^2$, we have 
\begin{eqnarray*}
\sum_{n\in\mathbb Z}\Bigl(\sum_{|k|,|l|\leq M}\sum_{m\in\mathbb Z}x_{nmkl}\Bigr)^2&=&\sum_{n,m,m',k,k',l,l'}x_{nmkl}x_{nm'k'l'}\\
&\leq& M^2\sum_{n,m,m',k,l}x_{nmkl}x_{nm'kl}\\
&=&M^2\sum_{k,l}\sum_n\Bigl(\sum_m x_{nmkl}\Bigr)^2\\
&\leq& M^4\cdot L^3\varepsilon^2/M^4=L^3\varepsilon^2.
\end{eqnarray*}

Thus, for $|t-t_0|<\delta$ we have $\|\beta_t(S_{f,g})-\beta_{t_0}(S_{f,g})\|^2<2L^3\varepsilon^2$ which proves continuity.
\end{proof}

\section{Continuous field of Roe algebras}

Continuous fields of $C^*$-algebras (aka bundles of $C^*$-algebras or $C(T)$-$C^*$-algebras) were introduced by Fell \cite{Fell} and Dixmier (\cite{Dixmier}, Section 10). Recall that a continuous field of $C^*$-algebras over a locally compact Hausdorff space $T$ is a triple $(T,A,\pi_t:A\to A_t)$, where $A$ and $A_t$, $t\in A$, are $C^*$-algebras, the $*$-homomorphisms $\pi_t$ are surjective, the family $\{\pi_t\}_{t\in T}$ is faithful, and the map $t\mapsto\|\pi_t(a)\|$ is continuous for any $a\in A$. 

Set $T=[0,1]$, $A_t=C^*_u(\mathbb Z)$ for $t\neq 0$. The fiber $A_0$ over $0$ was defined in the previous section. 
Set 
$$
A=C_0((0,1];C^*_u(\mathbb Z))+\{\beta^S:S\in A_0\}\subset \prod_{t\in T}A_t.
$$

\begin{lem}
The set $A$ is norm closed.

\end{lem}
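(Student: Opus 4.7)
The plan is to take a convergent sequence $a_n \in A$ with limit $a \in \prod_{t\in T}A_t$, write each $a_n = f_n + \beta^{S_n}$ with $f_n \in C_0((0,1];C^*_u(\mathbb Z))$ (extended by $0$ at $t=0$) and $S_n \in A_0$, and produce a decomposition of $a$ of the same form. The natural candidates for the limit summands are $S := a(0) \in A_0$ and $f := a - \beta^S$.

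The first step is to verify that $S \mapsto \beta^S$ is an isometric linear embedding of $A_0$ into $\prod_{t\in T} A_t$. For $t > 0$, the formula $\beta_t(S) = U_t^* S U_t$ with $U_t$ an isometry gives $\|\beta_t(S)\| \le \|S\|_{A_0}$, while evaluation at $t=0$, where $\beta^S(0) = S$, forces the opposite inequality $\|\beta^S\| \ge \|S\|_{A_0}$. Hence $\|\beta^S\| = \|S\|_{A_0}$, and linearity is immediate from the formulas.

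Next, I would evaluate at $t=0$. Since $f_n$ vanishes there, $a_n(0) = S_n$, so uniform convergence of $a_n$ forces $S_n$ to converge in the complete algebra $A_0$ to $S := a(0)$. By the isometry established above, $\beta^{S_n} \to \beta^S$ in $\prod_t A_t$. Consequently $f_n = a_n - \beta^{S_n}$ converges to $a - \beta^S$, and since $C_0((0,1];C^*_u(\mathbb Z))$ is complete, hence closed as a subspace of the product, the limit $f := a - \beta^S$ lies in it. The decomposition $a = f + \beta^S$ then exhibits $a \in A$.

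The only step with real content is the isometric nature of $S \mapsto \beta^S$, and this is immediate from $U_t$ being an isometry for $t>0$ together with the convention $\beta^S(0)=S$ at the endpoint; in particular the supremum defining $\|\beta^S\|$ is already attained at $t=0$. Everything else is routine bookkeeping with completeness of the two summands and linearity of $\beta^{\bullet}$, so the lemma presents no real obstacle.
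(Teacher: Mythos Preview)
Your argument is correct and in fact more direct than the paper's. Under the natural convention that $\beta^S(0)=S\in A_0$ (which is implicit in the paper's inclusion $A\subset\prod_{t\in[0,1]}A_t$ and in the later definition of $\pi_0$), the sup-norm of $\beta^S$ is attained at $t=0$, so the isometry $\|\beta^S\|=\|S\|$ is immediate, and the rest is indeed bookkeeping. The paper instead establishes the stronger statement $\sup_{t\in(0,1]}\|\beta_t(S)\|=\|S\|=\lim_{t\to 0}\|\beta_t(S)\|$ (equation~(\ref{121}) in the paper), via an argument that $p_t\to 1$ in the $*$-strong topology, and then uses the quotient $\overline A/I$ by the ideal $I=C_0((0,1];C^*_u(\mathbb Z))$ to control the $S_n$. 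Your route avoids both the strong-convergence argument and the quotient machinery; on the other hand, the paper's detour through (\ref{121}) is not wasted, since the limit statement $\lim_{t\to 0}\|\beta_t(S)\|=\|S\|$ is exactly what is invoked afterward to verify continuity of $t\mapsto\|\pi_t(a)\|$ at $t=0$ in the continuous-field theorem. So the paper is effectively proving two things at once, while you have isolated the minimal argument needed for closedness alone.
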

\begin{proof}
First, let us show that 
\begin{equation}\label{121}
\sup_{t\in(0,1]}\|\beta_t(S)\|=\|S\|=\lim_{t\to 0}\|\beta_t(S)\|. 
\end{equation}
Consider the projections $p_t=U_tU_t^*$ in $L^2(\mathbb R)$. Note that $\|U_t^*SU_t\|=\|p_tSp_t\|$. Let $f\in C_0(\mathbb R)$ be a Lipschitz function with finite support $[a,b]$, and let $L$ be the Lipschitz constant for $f$. Let $g_t=\sum_{n\in\mathbb Z}\sqrt{t}f(tn)\varphi_n^t$ be a piecewise linear function such that $g_t(tn)=f(tn)$ for any $n\in\mathbb N$. Then $|f(x)-g_t(x)|_{L^2}\leq Lt$ for any $x\in\mathbb R$, hence $\|f-g_t\|_{L^2}\leq Lt\sqrt{b-a+2}$. As $g_t$ lies in the linear span of the functions $\varphi_n^t$, $n\in\mathbb N$, we have $g_t=p_tg_t$. As $\|f-p_tf\|_{L^2}\leq \|f-p_tg_t\|_{L^2}$, we have $\lim_{t\to 0}f-p_tf=0$. As Lipschitz functions with finite support are dense in $L^2(\mathbb R)$, we conclude that the $*$-strong limit of $p_t$ is the identity operator. Note also that $\varphi_n^{2t}=\sqrt{2}\varphi_{2n}^t+\frac{\sqrt{2}}{2}\varphi_{2n-1}^t+\frac{\sqrt{2}}{2}\varphi_{2n+1}^t$, hence the linear span of $\{\varphi_n^t\}$ lies in the linear span of $\{\varphi_n^{t/2}\}$, therefore $p_t\leq p_{t/2}$ for any $t\in(0,1]$, hence the sequence $\|p_{t/2^k}Sp_{t/2^k}\|$ is increasing. 

Consider the norm closure $\overline A$ of $A$. Then $I=C_0((0,1];C^*_u(\mathbb Z))$ is a closed ideal in $\overline A$. Let $\{f_n+\beta^{S_n}\}$ be a Cauchy sequence in $A$. Passing to the quotient $C^*$-algebra $\overline A/I$, the sequence $\{f_n+\beta^{S_n}+I\}=\{\beta^{S_n}+I\}$ is also a Cauchy sequence, as the quotient $*$-homomorphisms have norm 1 (\cite{Dixmier}, Section 1.8). Note that
\begin{eqnarray*}
\|\beta^{S}+I\|=\inf_{f\in I}\|f+\beta^{S}\|\geq\lim_{t\to 0}\|f(t)+\beta_t(S)\|=\lim_{t\to 0}\|\beta_t(S)\|=\|S\|,
\end{eqnarray*} 
hence $\{S_n\}$ is also a Cauchy sequence. As $A_0$ is norm closed, it has a limit in $A_0$. But then $\{f_n\}$ is also a Cauchy sequence, and as $I$ is norm closed, its limit lies in $I$. Thus $\{f_n+\beta^{S_n}\}$ converges in $A$.
\end{proof}

Define $\pi_t:A\to A_t$ by $\pi_t(f+\beta^S)=f(t)+\beta_t(S)$ for $t>0$, and $\pi_0(f+\beta^S)=S$. These maps are well defined as $f_1+\beta^{S_1}=f_2+\beta^{S_2}$ implies that $f_1=f_2$ and $S_1=S_2$.

\begin{thm}
The triple $(T,A,\pi_t:A\to A_t)$ is a continuous field of $C^*$-algebras.

\end{thm}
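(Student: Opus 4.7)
The plan is to verify each of the four conditions in the definition of a continuous field: that $A$ is a $C^*$-algebra, that every $\pi_t$ is a surjective $*$-homomorphism, that the family $\{\pi_t\}$ is faithful, and that $t \mapsto \|\pi_t(a)\|$ is continuous. I will dispose of surjectivity and faithfulness first, then turn to multiplicative closure of $A$ (which I expect to be the main obstacle), and finish with continuity of the norm function.

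For surjectivity, at $t = 0$ one reads off $\pi_0(\beta^S) = S$; at $t > 0$, given any $a \in C^*_u(\mathbb Z)$, choose $f \in C_0((0,1]; C^*_u(\mathbb Z))$ with $f(t) = a$ to obtain $\pi_t(f) = a$. For faithfulness, if $\pi_t(f + \beta^S) = 0$ for every $t$, evaluate at $t = 0$ to force $S = 0$, and then at positive $t$ to force $f(t) = 0$ for all $t > 0$. Each $\pi_t$ will be a $*$-homomorphism by construction once multiplicative closure is established; moreover, the representation $a = f + \beta^S$ is unique because equation~(\ref{121}) shows that $\beta^S \in C_0((0,1]; C^*_u(\mathbb Z))$ forces $S = 0$.

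The heart of the argument is verifying that $A$ is a $*$-subalgebra of the bounded sections $l^\infty$-$\prod_{t \in T} A_t$. Norm-closure was established in the previous lemma, and involution follows from $\beta_t(S)^* = \beta_t(S^*)$. For multiplicative closure, expand $(f_1 + \beta^{S_1})(f_2 + \beta^{S_2})$: the cross terms $f_1 \beta^{S_2}$ and $\beta^{S_1} f_2$ are continuous on $(0,1]$ by Theorem~\ref{continuity} and vanish as $t \to 0$ since $\|f_i(t)\| \to 0$, placing them in $C_0((0,1]; C^*_u(\mathbb Z))$. The remaining term
\[
h(t) = \beta_t(S_1)\beta_t(S_2) - \beta_t(S_1 S_2) = -U_t^*\, S_1(1 - p_t) S_2\, U_t
\]
is continuous on $(0,1]$ by the same theorem, and the crux is $\|h(t)\| \to 0$ as $t \to 0$. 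Here I would observe that every element of $A_0$ is in fact compact on $L^2(\mathbb R)$: the dense generators $S_{f,g}$ have bounded, compactly supported kernels $f(x)g(x - y)$ and are therefore Hilbert--Schmidt, and compactness is preserved under norm limits. Combined with strong convergence $p_t \to 1$ (established in the proof of the previous lemma), compactness of $S_2$ yields $\|(1 - p_t) S_2\| \to 0$, hence $\|h(t)\| \leq \|S_1\|\cdot\|(1 - p_t) S_2\| \to 0$. Thus the product equals $\beta^{S_1 S_2}$ plus a section in $C_0((0,1]; C^*_u(\mathbb Z))$, and lies in $A$ since $S_1 S_2 \in A_0$.

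Finally, for continuity of $t \mapsto \|\pi_t(a)\|$ at $a = f + \beta^S$: at any $t_0 > 0$, this follows from continuity of $f$, norm-continuity of $\beta^S$ on $(0, \infty)$ (Theorem~\ref{continuity}), and continuity of the norm function. At $t_0 = 0$, equation~(\ref{121}) gives $\lim_{t \to 0}\|\beta_t(S)\| = \|S\|$ and $\|f(t)\| \to 0$, so $\lim_{t \to 0}\|f(t) + \beta_t(S)\| = \|S\| = \|\pi_0(a)\|$. The main obstacle throughout is the multiplicative closure of $A$, which rests on the fact that $A_0$ sits inside the compact operators.
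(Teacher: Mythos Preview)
Your argument is correct, and in fact addresses a point the paper's own proof passes over in silence: the paper checks surjectivity, faithfulness, and continuity of $t\mapsto\|\pi_t(a)\|$ exactly as you do, but never verifies that $A$ is closed under multiplication (the preceding lemma only gives norm-closure). Your treatment of the product is therefore genuinely additional content rather than a different route to the same endpoint.

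The key step you supply---that $A_0\subset\mathbb K(L^2(\mathbb R))$ because each generator $S_{f,g}$ with $f,g$ compactly supported has kernel $K(x,z)=f(x)g(x-z)\in L^2(\mathbb R^2)$ and is hence Hilbert--Schmidt---is correct, and combining it with the $*$-strong convergence $p_t\to 1$ established in the proof of~(\ref{121}) cleanly yields $\|(1-p_t)S_2\|\to 0$, so that $h(t)=\beta_t(S_1)\beta_t(S_2)-\beta_t(S_1S_2)$ lies in $C_0((0,1];C^*_u(\mathbb Z))$. This compactness observation is a pleasant bonus (it is essentially the Stone--von Neumann picture $C_0(\mathbb R)\rtimes\mathbb R\cong\mathbb K$, extended to $L^\infty_0$ coefficients), and it makes the verification that $\pi_0$ is multiplicative immediate. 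The remaining parts of your proof---surjectivity, faithfulness, uniqueness of the decomposition $a=f+\beta^S$, and continuity at $t=0$ via~(\ref{121})---coincide with the paper's argument.
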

\begin{proof}
Each $\pi_t$ is clearly surjective. If $\pi_t(f_1+\beta^{S_1})=\pi_t(f_2+\beta^{S_2})$ for any $t\in T$ then, taking $t=0$, we conclude that $S_1=S_2$. Then we see that $f_1(t)=f_2(t)$ for any $t\in(0,1]$, hence $f_1=f_2$. Finally, we have to check that the map $t\mapsto \pi_t(a)$ is continuous. Let $a=f+\beta^S$. Continuity at $t>0$ follows from continuity of $f$ (by definition) and continuity of $\beta^S$ (Theorem \ref{continuity}). Continuity at 0 follows from (\ref{121}).  
\end{proof}


\end{document}